\begin{document}

\newcommand{\be}{\begin{enumerate}}
\newcommand{\ee}{\end{enumerate}}
\newcommand{\MD}{\mathrel{\mathcal {D}}}
\newcommand{\CC}{{\mathcal C}}
\newcommand{\CR}{{\mathcal R}}
\newcommand{\CL}{{\mathcal L}}

\newcommand{\CH}{{\mathcal H}}
\newcommand{\CJ}{{\mathcal J}}

\newtheorem{theorem}{Theorem}[section]
\newtheorem{propo}[theorem]{Proposition}
\newtheorem{result}[theorem]{Result}
\newtheorem{lemma}[theorem]{Lemma}

\newtheorem{example}[theorem]{Example}
\newtheorem{corol}[theorem]{Corollary}

\title{A note on the Howson property in inverse semigroups}

\author{Peter R. Jones\\
Department of Mathematics, Statistics and Computer Science\\
Marquette University\\
Milwaukee, WI 53201, USA\\
peter.jones@mu.edu}
\date{\today}

\maketitle

\begin{abstract} An algebra has the Howson property if the intersection of any two finitely generated subalgebras is finitely generated. A simple necessary and sufficient condition is given for the Howson property to hold on an inverse semigroup with finitely many idempotents.  In addition, it is shown that any monogenic inverse semigroup has the Howson property.

\noindent {\it Keywords\/}:  Howson property; $E$-unitary; monogenic\\

\noindent Mathematics Subject Classification: 20M18\\

\end{abstract}
\section{Introduction}\label{intro}  An algebra has the \textit{Howson property} if the intersection of any two finitely generated subalgebras is again finitely generated.  The eponym `Howson' stems from \cite{howson}, where it was shown that free groups have this property.  Motivated by recent work by P.V. Silva and F. Soares \cite{silva_soares}, we find a remarkably simple characterization of the inverse semigroups having the Howson property, under the assumption that the semilattice of idempotents is finite: the Howson property holds if and only if the same is true for its maximal subgroups.  When specialized to $E$-unitary semigroups, it can immediately be deduced that the Howson property holds if and only if the same is true for its maximal group image.  Thus we generalize the main theorem of \cite{silva_soares} using only elementary methods based on Green's relations.  Some of the technical ideas were motivated by techniques from the authors work \cite{jones_LF} on full inverse subsemigroups.

Throughout, inverse semigroups are to be regarded as unary semigroups.  Thus a group has the Howson property, regarded as a group, if and only if the same is true regarded as an inverse semigroup.  Perhaps the deepest earlier work on the Howson property for inverse semigroups was by P.G. Trotter and the author, who showed \cite{jones_trotter} that although free inverse semigroups of rank one have the Howson property, the property fails for free inverse semigroups of rank greater than one.  Silva also showed \cite{silva}, however, that the intersection of any two monogenic inverse subsemigroups of any free inverse semigroup is again finitely generated.

Before providing technical background, we connect this work with the cited paper \cite{silva_soares}, which concerns itself with the ($E$-unitary) inverse semigroups that are the semidirect products of semilattices and groups.  Their first main result (Theorem 3.4, that the resulting semigroup has the Howson property if and only if the group itself does) is proved under the hypothesis that the semilattice is finite, and thereby follows from our cited theorem (see Corollary~\ref{E-unitary}).  Although they do not make this observation, we may note that in fact our corollary may be decuced from their result, by virtue of O'Carroll's embedding theorem \cite{ocarroll}.  See the discussion following the cited corollary for more details.
\section{Preliminaries}\label{prelim} We refer the reader to the text by Petrich \cite{petrich} for inverse semigroups in general, including background on Green's relations, congruences and so on. The semilattice of idempotents of an inverse semigroup $S$ is denoted $E_S$. If $X \subseteq S$, then $\langle X \rangle$ denotes the inverse subsemigroup generated by $X$. An inverse semigroup is \textit{monogenic} if it is generated by a single element.  The notation $T \leq S$ means that $T$ is an inverse subsemigroup of $S$.

The \textit{Brandt} semigroups \cite[Section II.3]{petrich} are the completely 0-simple inverse semigroups.  They are the semigroups isomorphic to the following: given a group $G$ and a nonempty set $I$,  $B(G,I)$ consists of the set $  I \times G \times I \cup \{0\}$, where $(i , g, j)(j, h, k) = (i, gh, k)$ and all other products are 0. The nonzero idempotents are therefore in one-one correspondence with the set $I$.  Every primitive inverse semigroup --- one in which every nonzero idempotent is minimal --- is the 0-direct union of Brandt semigroups.

Let $S$ be an inverse semigroup.  For each $\CJ$-class $J$ of $S$, let $PF(J)$ denote the \textit{principal factor} associated with $J$. Formally (\cite[Section I.6]{petrich}), if $J = J_a$ is not the least $\CJ$-class of $S$, $PF(J) = J(a)/I(a)$, where $J(a) = SaS$ and $I(a) = J(a) \backslash J_a$.  In practice, we consider $PF(J)$ as $J \cup \{0\}$, with the products from $J$ being those in $S$, if they lie in $J$, all other products being 0.  If $J$ is the least $\CJ$-class of $S$, then $PF(J)$ is defined to be $J$ itself.

If $J$ is the least $\CJ$-class of $S$ (its \textit{kernel}), then $PF(J)$ is a group.  Otherwise, $PF(J)$ is 0-simple and, if $E_J$ is finite, completely 0-simple and thus a Brandt semigroup.  In general, $S$ is \textit{completely semisimple} if each principal factor is a group or a Brandt semigroup.  This holds if and only if $S$ contains no bicyclic subsemigroup.  In such a semigroup, ${\cal D} = \CJ$.

We shall represent the bicyclic semigroup $B$ via the presentation $\langle x : xx^{-1}\geq x^{-1}x\rangle$, in which case $E_S $ is the chain $e_0 > e_1 > e_2 > \cdots$, where $e_i = x^{-i}x^i$ ($x^0$ corresponding to the identity element $1 = x x^{-1}$).  For alternative representations, see \cite{petrich}.

An inverse semigroup $S$ is $E$-\textit{unitary} if whenever $s \in S, e \in E_S$ and $es \in E_S$ then $s \in E_S$.  Let $\sigma = \{(s,t) \in S \times S : es = et \text{ for some } e \in E_S\}$ denote the least group congruence on any inverse semigroup, so that $S/\sigma$ is its maximal group homomorphic image.  Then \cite[Proposition III.7.2]{petrich} $S$ is $E$-unitary if and only if $E_S$ is a $\sigma$-class of $S$ and if and only if $\CR \cap \sigma$ is the identity relation.

Note that if the $E$-unitary inverse semigroup $S$ has a least idempotent $e$, say, then its kernel $J_e = H_e$ is isomorphic to $S/\sigma$.


\section{The main theorem}\label{howson}

\begin{propo}\label{brandt fg} A Brandt semigroup $B( G, I)$ is finitely generated if and only if $I$ is finite and $G$ is finitely generated.
\end{propo}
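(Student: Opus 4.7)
The plan is to prove both directions by carefully tracking which indices and which group elements can appear in products of generators (and their inverses).

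\medskip

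\noindent\textbf{Necessity.} Suppose $B(G,I)$ is generated, as an inverse semigroup, by a finite set $X$. I may assume $X$ consists of nonzero elements, writing $X = \{(i_k, g_k, j_k) : 1 \leq k \leq n\}$; note $X^{-1} = \{(j_k, g_k^{-1}, i_k)\}$. In any nonzero product $(a_1, h_1, b_1)(a_2, h_2, b_2) \cdots (a_r, h_r, b_r)$ of elements of $X \cup X^{-1}$, the result has the form $(a_1, h_1 h_2 \cdots h_r, b_r)$, with the indices $a_1, b_r$ drawn from the finite set $Y := \{i_1, \ldots, i_n, j_1, \ldots, j_n\}$. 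Consequently every nonzero element of $\langle X \rangle$ lies in $Y \times G \times Y \cup \{0\}$. Since $\langle X \rangle = B(G,I)$ must contain elements with every possible first (and second) coordinate, this forces $I \subseteq Y$, hence $I$ is finite. For the group, fix any $i \in I$; then for each $g \in G$ the element $(i,g,i)$ must be a product as above, so $g$ is a word in the $g_k$'s and their inverses. Hence $\{g_1, \ldots, g_n\}$ generates $G$.

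\medskip

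\noindent\textbf{Sufficiency.} Suppose $I = \{i_0, i_1, \ldots, i_p\}$ is finite and $G$ is generated as a group by $\{g_1, \ldots, g_m\}$. I take as my finite generating set
\[
X = \{(i_0, g_k, i_0) : 1 \leq k \leq m\} \cup \{(i_0, 1, i_s) : 1 \leq s \leq p\}.
\]
The subsemigroup $\langle (i_0, g_k, i_0) : k \rangle$ contains $(i_0, g, i_0)$ for every $g \in G$, since the map $g \mapsto (i_0, g, i_0)$ is a group isomorphism from $G$ onto this $\CH$-class. Using the inverses $(i_s, 1, i_0) \in X^{-1}$, one recovers each $(i_s, g, i_t) = (i_s, 1, i_0)(i_0, g, i_0)(i_0, 1, i_t)$, and any product of two generators with incompatible middle indices yields $0$. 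Thus $\langle X \rangle = B(G,I)$.

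\medskip

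There is no real obstacle here: once one recognizes that inverse-subsemigroup generation forces closure under the unary inverse, the bookkeeping of coordinates is routine. The only minor subtlety is to remember in the necessity argument to include the inverses of the generators when analyzing which indices can appear, and in the sufficiency argument to note that $0$ arises automatically from products of generators that do not compose in $I$.
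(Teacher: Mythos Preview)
Your proof is correct and follows essentially the same approach as the paper: for sufficiency you use a copy of the generators of $G$ inside a fixed group $\CH$-class together with a transversal of elements linking that $\CH$-class to the others (the paper phrases this via Green's lemma, you via explicit coordinates), and for necessity you bound the index set by the coordinates of the generators and read off a generating set for $G$ from their middle components. The only cosmetic difference is that the paper works in Green's-relations language while you compute directly with triples; your necessity argument is in fact a bit more streamlined than the paper's.
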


\begin{proof}  First, suppose $I$ is finite,  $I = \{1, 2, \ldots, n\}$, say, and let $A$ be any generating set for $G$.  As usual, the nonzero $\CR$-classes and the nonzero $\CL$-classes may be indexed by $I$ and thus the nonzero $\CH$-classes by $I \times I$, in such a way that the group $\CH$-classes are $\{H_{ii}: i \in I\}$, all isomorphic with $G$.  Let $x_1, x_2, \ldots, x_n$ be a transversal of the $\CH$-classes $H_{11}, H_{12}, \ldots, H_{1n}$, with $x_1 $ the identity element $e$ of $H_{11}$.  By Green's lemma \cite[Lemma I.6.9]{petrich}, $H_{ij} = x_i^{-1} H_{11} x_j$ for each $(i,j) \in I \times I$, so $B(G, I) = \langle A \cup \{ x_2, \ldots, x_n\} \rangle$.  Clearly, if $G$ is finitely generated, then so is $B(G,I)$.  

Conversely, if $B(G, I)$ is finitely generated, then $I$ is finite (since any nonzero element is $\CR$-related to either a generator or the inverse of a generator).  In the notation of the previous paragraph, $B(G,I) = \langle B \cup \{ x_2, \ldots, x_n\} \rangle$ for some finite subset $B$ of $A$.  Now if $g \in H_{11}$, then the only nonzero products with members of $\{ x_2, \ldots, x_n\} \cup \{ x_2^{-1}, \ldots, x_n^{-1}\}$ are of the form $gx_j$, $x_i^{-1}g$ and $x_i^{-1}gx_j$.  But the only nonzero products of members of $\{ x_2, \ldots, x_n\} \cup \{ x_2^{-1}, \ldots, x_n^{-1}\}$ are $x_j x_j^{-1} = e$ and $x_j^{-1}x_j \in H_{jj}$.  Hence $H_{11} = \langle B \rangle$ and $G$ is finitely generated.
\end{proof}

\begin{corol}\label{brandt howson} A Brandt semigroup $B( G, I)$ has the Howson property if and only if the same is true of $G$.
\end{corol}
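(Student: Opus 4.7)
The plan is to prove both directions using the \emph{cell data} of an inverse subsemigroup $T \leq B(G,I)$: for $i,j \in I$, write $T_{ij} = \{g \in G : (i,g,j) \in T\}$. Then $T_{ii}$ is a subgroup of $G$, $T_{ji} = T_{ij}^{-1}$, and $T_{ij} T_{jk} \subseteq T_{ik}$; moreover, whenever $T_{ij}$ is nonempty it coincides with the right coset $T_{ii} g_{ij}$ for any fixed $g_{ij} \in T_{ij}$. This description of $T$ is the main structural input.

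The forward direction is straightforward. Assuming $B(G,I)$ has the Howson property, and given finitely generated subgroups $H_1, H_2$ of $G$, I would fix any $i \in I$ and identify $G$ with $H_{ii}$ via $g \mapsto (i,g,i)$. The images $S_k = \{(i,h,i) : h \in H_k\}$ are then finitely generated inverse subsemigroups of $B(G,I)$ whose intersection is the image of $H_1 \cap H_2$. Any finite inverse-semigroup generating set for $S_1 \cap S_2$ consists of elements of $H_{ii}$ and so yields a finite group generating set for $H_1 \cap H_2$.

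For the converse, assume $G$ has the Howson property and let $T_1, T_2$ be finitely generated inverse subsemigroups of $B(G,I)$. First I would establish the structural fact that each $T_k$ has only finitely many nonempty cells, with each diagonal cell $(T_k)_{ii}$ a finitely generated subgroup of $G$; this follows by adapting the word argument from the converse half of Proposition~\ref{brandt fg}, or equivalently by decomposing $T_k$ as a $0$-disjoint union of sub-Brandt semigroups $B(G',C)$ with $C$ finite and $G'$ finitely generated, and then applying Proposition~\ref{brandt fg} to each piece. Next I would compute the cell data of the intersection: $(T_1 \cap T_2)_{ij} = (T_1)_{ij} \cap (T_2)_{ij}$. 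On the diagonal this is the intersection of two finitely generated subgroups of $G$, hence finitely generated by Howson in $G$; off the diagonal a nonempty intersection of two cosets is a coset of $(T_1)_{ii} \cap (T_2)_{ii}$. A finite generating set for $T_1 \cap T_2$ is then obtained by collecting finite group generators for each nonempty diagonal cell of the intersection together with one coset representative for each nonempty off-diagonal cell.

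The main obstacle is the structural description invoked in the converse: showing that for a finitely generated $T \leq B(G,I)$ the nonempty cells are finite in number and the diagonal cells are themselves finitely generated subgroups of $G$. Once this is in hand the remainder reduces to routine coset manipulations and a direct application of Howson in $G$.
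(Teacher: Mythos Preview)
Your proposal is correct and follows essentially the same approach as the paper's proof: both reduce to showing that the maximal subgroups (your diagonal cells $T_{ii}$) of a finitely generated inverse subsemigroup are finitely generated, then intersect these using Howson in $G$, and reconstruct finite generation of the intersection from the diagonal data plus finitely many off-diagonal representatives. The only difference is cosmetic --- the paper phrases this via the $0$-direct decomposition of a primitive inverse semigroup into Brandt factors and appeals to Proposition~\ref{brandt fg}, whereas you work directly with the Rees-matrix coordinates, but the underlying argument is identical.
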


\begin{proof} Necessity is clear.  To prove the converse, first consider any inverse subsemigroup $T$ of $S = B(G,I)$ that is not just a subgroup.  Then $T$ is primitive and thus a 0-direct union of Brandt subsemigroups.  Clearly, $T$ is finitely generated if and only if there are finitely many factors, each of which is finitely generated. So by Proposition~\ref{brandt fg} it is finitely generated if and only if each factor has finitely many idempotents and its maximal subgroups (which are subgroups of $G$) are finite generated.

Then let $U$ and $V$ be finitely generated inverse subsemigroups of $S$ that are not just subgroups.  If $U \cap V$ is nonempty and not $\{0\}$, first suppose it is a nonzero group, a subgroup of $H_e$, say, for some $e \in E_S$ .  Then it is a common subgroup of a 0-direct factor of $U$ and a 0-direct factor of $V$ and therefore it is the intersection of the subgroups $U \cap H_e$ and $V \cap H_e$.  By the preceding paragraph, each of these subgroups is finitely generated and therefore, by the Howson property, so is $U \cap V$.

If $U \cap V$ is not just a subgroup, then, similarly, each factor in its 0-direct union is contained within a 0-direct factor of $U$ and a 0-direct factor of $V$ and so is the intersection of those two factors.  Each of its maximal subgroups is therefore the intersection of a maximal subgroup of $U$ with a maximal subgroup of $V$ and is therefore finitely generated.  By Proposition~\ref{brandt fg}, $U \cap V$ is finitely generated.
\end{proof}

\begin{lemma}\label{tech} Let $S$ be an inverse semigroup, $T = \langle X \rangle$ an inverse subsemigroup of $S$, and $J$ a $\CJ$-class of $S$.  Then $T \cap J \subseteq \langle E_J X \cap T \cap J \rangle$.
\end{lemma}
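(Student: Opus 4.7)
The plan is to take an arbitrary $t \in T \cap J$, expand it as a semigroup word $t = x_1 x_2 \cdots x_n$ with each $x_i \in X \cup X^{-1}$, and then rewrite $t$ as a product $y_1 y_2 \cdots y_n$ in which every factor has the form (idempotent in $E_J$)$\cdot x_i$ and every partial product lies in $J$. The difficulty is that a generator $x_i$ taken on its own may push a partial product out of $J$; one must prepend a carefully chosen idempotent to force the partial products to remain in the prescribed $\mathcal{J}$-class.

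Explicitly, I would set $q_i = tt^{-1} x_1 x_2 \cdots x_i$, so that $q_0 = tt^{-1}$, $q_n = t$, and $q_i = q_{i-1} x_i$. A sandwich argument shows $q_i \in J$ for every $i$: on one hand $q_i = (tt^{-1})(x_1 \cdots x_i)$, so $J_{q_i} \le J_{tt^{-1}} = J$; on the other hand $q_i (x_{i+1} \cdots x_n) = t$, so $J \le J_{q_i}$. Now define $y_i = (q_{i-1}^{-1} q_{i-1}) x_i$. The factor $q_{i-1}^{-1} q_{i-1}$ is $\mathcal{L}$-related to $q_{i-1}$ and thus lies in $E_J$; a direct calculation gives $q_{i-1} y_i = q_{i-1} x_i = q_i$, and the same sandwich reasoning places $y_i \in J$. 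Because $q_0$ is already idempotent one has $y_1 = q_0 x_1 = q_1$, and telescoping yields $t = q_n = y_1 y_2 \cdots y_n$.

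It remains to reconcile the factorisation with the statement, which calls for generators in $E_J X$ rather than $E_J(X\cup X^{-1})$. If $x_i \in X$ then $y_i \in E_J X \cap T \cap J$ directly. If instead $x_i = x^{-1}$ with $x \in X$, write $e = q_{i-1}^{-1} q_{i-1}$, so that $y_i = ex^{-1}$ and $y_i^{-1} = xe$. Using that idempotents in an inverse semigroup commute, $(xex^{-1})\cdot x = x\cdot(x^{-1}x)\cdot e = xe$, so $y_i^{-1} = f x$ with $f = xex^{-1} = y_i^{-1} y_i \in E_J$; hence $y_i^{-1}\in E_J X\cap T \cap J$ and $y_i \in \langle E_J X \cap T \cap J\rangle$. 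In either case $t = y_1 \cdots y_n$ lies in $\langle E_J X \cap T \cap J\rangle$. The main obstacle is really the first step: spotting the correct left-correction $tt^{-1}$ that anchors every prefix inside $J$; once that is in place, the remaining algebra is routine.
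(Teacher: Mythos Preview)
Your proof is correct and follows essentially the same strategy as the paper's: rewrite $t$ as a product of factors of the form $e_i x_i$ with each $e_i \in E_J \cap T$ and each factor lying in $J$, then handle the case $x_i = x^{-1}\in X^{-1}$ via the conjugation identity $ex^{-1} = ((xex^{-1})\,x)^{-1}$. The only difference is that the paper obtains the idempotents by invoking Hall's lemma (taking $e_i = x_i\cdots x_n\, t^{-1}\, x_1\cdots x_{i-1}$), whereas you construct them directly as $q_{i-1}^{-1}q_{i-1}$ from the left-anchored prefixes $q_i = tt^{-1}x_1\cdots x_i$, which makes the argument self-contained.
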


\begin{proof}  Let $t \in T \cap J$, $t = y_1 \cdots y_n$, where for each $i$, $y_i$ or $y_i^{-1}$ belongs to $X$.  Then by the result of Hall \cite[Lemma 1]{hall}, $t = \bar{y_1} \cdots \bar{y_n}$, where for each $i$, $\bar{y_i} = e_i y_i$, $e_i = y_i \cdots y_n  t^{-1} y_1 \cdots y_{i-1}$ (with the usual provisions in case $i = 1$ or $i = n$).  By the cited result, each $\bar{y_i} \MD t$ and each $e_i \in E_J$. Clearly each $e_i \in T$.  Note that if $y_i = x^{-1}$ then $e_i y_i = (f_i x)^{-1}$, where $f_i = x e_i x^{-1} \in E_J \cap T$.  Thus $t \in \langle E_J X \cap T \cap J \rangle$.
\end{proof}

Let $T \leq S$ and $J = J_a$ a non-group $\CJ$-class of $S$ that $T$ meets nontrivially.  Then $T \cap I(a) \leq I(a)$ and so $(T \cap J) \cup \{0\}$ may be regarded as an inverse subsemigroup of $PF(J)$.  Denote it by $T_J$.  If $J$ is a subgroup, put $T_J = T \cap J$.

\begin{propo}\label{T_J} Let $S$ be an inverse semigroup with finitely many idempotents and $T \leq S$.  Then $T$ is finitely generated if and only if $T_J$ is finitely generated for each $\CJ$-class $J$ of $S$ that meets $T$ nontrivially.
\end{propo}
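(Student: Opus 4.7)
The plan is to handle the two directions separately, with Lemma~\ref{tech} providing the essential input for the forward direction. As a preliminary, note that since $E_S$ is finite, $S$ contains no bicyclic subsemigroup and is therefore completely semisimple; each principal factor $PF(J)$ is either a group or a Brandt semigroup. Since every $\CJ$-class of an inverse semigroup contains an idempotent, $S$ also has only finitely many $\CJ$-classes.

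For the direction $(\Rightarrow)$, fix a finite generating set $X$ of $T$, and for each $\CJ$-class $J$ meeting $T$ set $F_J = E_J X \cap T \cap J$, which is finite. Lemma~\ref{tech} gives $T \cap J \subseteq \langle F_J \rangle$ inside $S$. The main remaining task is to lift this inclusion to $PF(J)$: given $t \in T \cap J$ and Hall's rewriting $t = \bar{y}_1 \cdots \bar{y}_n$, a short induction shows $\bar{y}_1 \cdots \bar{y}_i = tt^{-1} y_1 \cdots y_i$, and because the idempotents $p_i = y_1 \cdots y_i y_i^{-1} \cdots y_1^{-1}$ form a descending chain with $p_n = tt^{-1}$, every prefix product $a = \bar{y}_1 \cdots \bar{y}_i$ satisfies $aa^{-1} = tt^{-1} p_i = tt^{-1}$, so $a$ is $\CR$-related to $t$ and in particular lies in $J$. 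Therefore the same product evaluates to $t$ inside $PF(J)$, giving $T \cap J \subseteq \langle F_J \rangle_{PF(J)}$. Adjoining $0$ to $F_J$ if needed when $J$ is non-group, we conclude that $T_J$ is finitely generated.

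For $(\Leftarrow)$, choose a finite generating set $F_J$ of each $T_J$ (discarding $0$), and let $F = \bigcup_J F_J$; this is finite because there are only finitely many $\CJ$-classes, and clearly $F \subseteq T$. Given $t \in T$ lying in some $\CJ$-class $J$, we have $t \in T_J$ and so $t$ factors as a product of elements of $F_J \cup F_J^{-1}$ in $PF(J)$; since this product is nonzero, the identical product evaluates to $t$ in $S$, so $t \in \langle F \rangle$. Hence $T = \langle F \rangle$ is finitely generated.

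The main obstacle is the prefix-product verification in the forward direction. Lemma~\ref{tech} only asserts that each Hall factor $\bar{y}_i$ lies in $J$, but since a product in $PF(J)$ collapses to $0$ as soon as a partial product leaves $J$, one must show that Hall's specific rewriting keeps all prefix products within the $\CR$-class of $t$. This is an explicit idempotent computation, but it is the crux of the argument; the bookkeeping concerning adjunction of $0$ to generators for non-group $\CJ$-classes is a minor further nuisance.
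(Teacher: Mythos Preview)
Your proof is correct and follows the paper's approach; the paper simply cites Lemma~\ref{tech} for the forward direction and does not spell out the prefix-product check you carry out. That check can be shortened: since $t\in J$ and each $\bar y_i\in J$, for any prefix $p=\bar y_1\cdots\bar y_i$ we have $J=J_t\le J_p\le J_{\bar y_1}=J$, so every partial product already lies in $J$ and the factorization transfers to $PF(J)$ automatically.
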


\begin{proof}  If $T $ is generated by the finite set $X$ then, by Lemma~\ref{tech}, each such $T_J$ is generated by the finite set $ E_J X \cap T \cap J$.  Conversely, if for each such $J$, $T_J = \langle X_J \rangle$, where we may assume $X_J$ does not contain the zero of $PF(J)$ in the non-group case, then when interpreted in $S$ each $X_J$ is contained in $T$ and $T \cap J \subseteq \langle X_J \rangle$.  Thus $T$ is generated by the union of the sets $X_J$.
\end{proof}

We can now prove the theorem stated in the introduction.

\begin{theorem}\label{theorem} Let $S$ be an inverse semigroup with finitely many idempotents.  Then $S$ has the Howson property if and only if each of its maximal subgroups has this property.
\end{theorem}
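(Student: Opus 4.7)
The proof plan is to combine the two previous propositions with two direct observations: that the hypothesis forces $S$ to be completely semisimple, and that within a group, ``finitely generated as an inverse subsemigroup'' and ``finitely generated as a group'' coincide (since in a group the unary inverse equals the group inverse and we may close generating sets under inverses).

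First I would note that since $S$ has only finitely many idempotents, it contains no bicyclic subsemigroup, so $S$ is completely semisimple, $\CD = \CJ$, and there are only finitely many $\CJ$-classes. Each principal factor $PF(J)$ is therefore either a group (when $J$ is the kernel $H_{e_0}$, since a completely simple inverse semigroup is a group) or a Brandt semigroup $B(G_J, I_J)$, whose maximal subgroup $G_J$ is (isomorphic to) a maximal subgroup of $S$.

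For the forward direction, let $G = H_e$ be a maximal subgroup of $S$ and let $A, B$ be finitely generated subgroups of $G$. As inverse subsemigroups of $S$ they are finitely generated (close the generating sets under group inverse), and by the Howson property for $S$ their intersection $A \cap B$ is a finitely generated inverse subsemigroup; since it sits inside the group $G$, this makes it finitely generated as a group.

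For the converse, let $U, V \leq S$ be finitely generated and set $W = U \cap V$. I would apply Proposition~\ref{T_J}, so it suffices to show $W_J$ is finitely generated for each of the (finitely many) $\CJ$-classes $J$ meeting $W$ nontrivially. The key observation is that $W_J = U_J \cap V_J$ inside $PF(J)$: a nonzero element of $U_J \cap V_J$ lies in $U \cap V \cap J = W \cap J$, and $0$ is common. Now split into two cases. If $J$ is the kernel, then $PF(J) = J$ is a maximal subgroup, $U_J$ and $V_J$ are finitely generated (by Proposition~\ref{T_J} applied to $U$ and $V$) subgroups of it, and by hypothesis their intersection is finitely generated. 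If $J$ is not the kernel, then $PF(J) = B(G_J, I_J)$ is a Brandt semigroup whose maximal subgroup $G_J$ has the Howson property by hypothesis, so by Corollary~\ref{brandt howson}, $PF(J)$ has the Howson property, and hence $U_J \cap V_J = W_J$ is finitely generated. Applying Proposition~\ref{T_J} in reverse yields that $W$ is finitely generated.

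The only point requiring care is the identification $W_J = U_J \cap V_J$ and the translation between ``finitely generated subgroup'' and ``finitely generated inverse subsemigroup'' in the group cases; both are routine. Every other step is a direct appeal to the results already proved.
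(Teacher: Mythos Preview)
Your proposal is correct and follows essentially the same route as the paper's proof: reduce via Proposition~\ref{T_J} to the factors $T_J$, identify $(U\cap V)_J = U_J \cap V_J$, and then invoke Corollary~\ref{brandt howson} on the Brandt principal factors and the group Howson hypothesis on the kernel. Your additional remarks on complete semisimplicity and on the equivalence of group versus inverse-semigroup finite generation are correct but merely make explicit what the paper leaves implicit.
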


\begin{proof} Necessity is clear.  For the converse, let $U$ and $V$ be finitely generated inverse subsemigroups of $S$.  Let $J$ be a $\CJ$-class of $S$ that meets $U \cap V$ nontrivially.  

If $J$ is the minimum $\CJ$-class of $S$, then all the relevant intersections are subgroups of $J$ and so $(U \cap V)_J$ is finitely generated, by the Howson property for $J$.

Otherwise, by the direct half of Proposition~\ref{T_J}, $U_J$ and $V_J$ are finitely generated inverse subsemigroups of $PF(J)$.  Applying Proposition~\ref{brandt howson}, $U_J \cap V_J$ is finitely generated.  But $U_J \cap V_J = ((U \cap J) \cap (V \cap J)) \cup \{0\} = ((U \cap V) \cap J) \cup \{0\} = (U \cap V)_J$.  Applying the reverse half of Proposition~\ref{T_J}, $U \cap V$ is therefore finitely generated and the Howson property holds in $S$.
\end{proof}

Finiteness of the semilattice of idempotents is necessary: as remarked in the introduction, the free inverse semigroups of rank greater than one do not satisfy the Howson property.  But all their subgroups are trivial.

\begin{corol}\label{E-unitary} Let $S$ be an $E$-unitary inverse semigroup with finitely many idempotents.  Then $S$ has the Howson property if and only if the maximal group image $S/\sigma$ has this property.  In particular \cite[Theorem 3.4]{silva_soares}, the semidirect product of a finite semilattice and a group has the Howson property if and only if the group has this property.
\end{corol}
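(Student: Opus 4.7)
The plan is to derive the corollary directly from Theorem~\ref{theorem} by identifying, up to isomorphism, all maximal subgroups of an $E$-unitary inverse semigroup with finitely many idempotents as subgroups of the single group $S/\sigma$, and then invoking the elementary fact that the Howson property passes to subgroups of groups.

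First I would note that a finite semilattice always has a least element, so $E_S$ has a minimum idempotent $e_0$. By the remark at the end of Section~\ref{prelim}, the kernel $J_{e_0} = H_{e_0}$ is a subgroup isomorphic to $S/\sigma$. Next I would observe that for any idempotent $e \in E_S$, the natural map $S \to S/\sigma$ is injective on $H_e$: this is a consequence of the $E$-unitary characterization that $\CR \cap \sigma$ is the identity, since $H_e \subseteq R_e$. Consequently every maximal subgroup of $S$ embeds into $S/\sigma$, and of course the maximal subgroup $H_{e_0}$ is \emph{all} of $S/\sigma$ up to isomorphism.

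Now the corollary falls out of Theorem~\ref{theorem}. For the forward direction, if $S$ has the Howson property then so does each maximal subgroup, in particular $H_{e_0} \cong S/\sigma$. For the converse, suppose $S/\sigma$ has the Howson property; since it is a group, any of its subgroups inherits the Howson property (two finitely generated subgroups of a subgroup $H \leq S/\sigma$ are a fortiori finitely generated in $S/\sigma$, and their intersection is the same set in either ambient group). Combined with the preceding paragraph, every maximal subgroup of $S$ has the Howson property, so Theorem~\ref{theorem} applies.

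For the "in particular" statement, I would simply cite that every semidirect product of a semilattice and a group is $E$-unitary with the given group as its maximal group image, and that the semilattice of idempotents is precisely the semilattice factor, hence finite by hypothesis. The previous paragraph then yields the stated equivalence. I do not expect any genuine obstacle here: the only point requiring a moment's care is checking that the natural map $S \to S/\sigma$ is injective on each group $\CH$-class, which is immediate from the $E$-unitary hypothesis.
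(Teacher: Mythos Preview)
Your proposal is correct and follows essentially the same line as the paper's own proof: identify the kernel $H_{e_0}$ with $S/\sigma$, observe that every maximal subgroup embeds there, and invoke Theorem~\ref{theorem}. You supply more detail (the injectivity via $\CR \cap \sigma = \iota$ and the hereditary nature of the Howson property for subgroups of groups) than the paper's one-sentence proof, but the argument is the same.
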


\begin{proof}  In this special case, the minimum $J$-class $J$ is isomorphic to $S/\sigma$ and each maximal subgroup of $S$ embeds in $J$, so the result is immediate from Theorem~\ref{theorem}.
\end{proof}

Remark: in \cite[Corollary 4.1]{silva_soares}, Silva and Soares cleverly apply the last statement of the corollary just stated to extend it to the case of locally finite actions: those for which the orbit of each element of the semilattice under finitely generated subgroups is finite.  One application was to actions on `strongly finite above semilattices with identity'.  These are semilattices possessing a well-defined height function, given by the maximum length of a chain from each element to the identity, and, further, having only finitely many elements of any given height.  Note that the semilattice of idempotents of any free inverse semigroup of finite rank has this property.  Thus it does not guarantee the Howson property for $E$-unitary inverse semigroups in general.

Another application of the same corollary --- to the case that the group is \textit{locally finite} --- is moot: any $E$-unitary inverse semigroup whose maximal group homomorphic image is locally finite is itself locally finite, by Brown's lemma \cite{brown}, and therefore immediately has the Howson property.

On the other hand, as remarked in the introduction, Corollary~\ref{E-unitary} may also be deduced from \cite[Theorem 3.4]{silva_soares}.  Sufficiency follows from O'Carroll's theorem \cite{ocarroll} that any $E$-unitary inverse semigroup $S$ [with finite semilattice of idempotents and] maximum group homomorphic image $G$ embeds in a semidirect product of a [finite] semilattice with $G$.  Necessity follows from the fact that $S/\sigma$ is isomorphic to the group kernel of $S$.

\section{Monogenic inverse semigroups}\label{B} As remarked in the introduction, the author and P.G. Trotter showed that the free inverse semigroup $FI_1$ of rank one (which is an $E$-unitary inverse semigroup) has the Howson property.  A key tool was the following.

    \begin{result}\cite[from Proposition 1.5]{jones_trotter} For any inverse subsemigroup $S$ of the free inverse semigroup of rank one, the inverse subsemigroup generated by the nonidempotents of $S$ is finitely generated.
  \end{result}  
  
   As far as the author is aware, all the inverse semigroups hitherto known to have the Howson property were completely semisimple, so the following results are of special interest.
    
  \begin{lemma}\label{fg}  Every inverse subsemigroup of the bicyclic semigroup $B$ that does not consist solely of idempotents is finitely generated. 
  \end{lemma}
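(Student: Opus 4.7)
The plan is to construct an explicit finite generating set for $S$. Throughout I represent elements of $B$ as pairs $(m,n) \in \mathbb{N} \times \mathbb{N}$ via $x^{-m}x^n \leftrightarrow (m,n)$, so that $(m,m) = e_m$ and the product is $(m,n)(k,\ell) = (m + \max(0, k-n),\ \ell + \max(0, n-k))$. Let $S \leq B$ contain a nonidempotent. I would first pass to the canonical homomorphism $\phi : B \to \mathbb{Z}$, $(m,n) \mapsto n - m$; since $S$ is inverse-closed, $\phi(S)$ is an inverse subsemigroup of the group $\mathbb{Z}$, hence a subgroup $d\mathbb{Z}$, and the nonidempotent hypothesis forces $d \geq 1$. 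Setting $i = \min\{m : (m, m+d) \in S\}$ and $a = (i, i+d)$, a routine computation shows $\langle a \rangle = \{(i + kd,\ i + \ell d) : k, \ell \geq 0\}$, a bicyclic subsemigroup of $S$.

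The core structural step will be an analysis of the idempotent support $N = \{n : (n,n) \in S\}$. For $m \in N$ with $m \geq i$ the identity $(m,m) \cdot a = (m, m + d) \in S$ gives $m + d \in N$ and $(m, m+d) \in S$; for $m \geq i + d$ the identity $(m,m) \cdot a^{-1} = (m, m - d) \in S$ analogously gives $m - d \in N$. Consequently, for each $r \in \{0, 1, \ldots, d-1\}$, the set $N \cap (r + d\mathbb{Z}) \cap [i, \infty)$, if nonempty, is an arithmetic progression $\{\rho_r + kd : k \geq 0\}$ whose starting point $\rho_r$ must lie in the window $[i, i + d - 1]$; the same identities then yield $(\rho_r + kd,\ \rho_r + \ell d) \in S$ for all $k, \ell \geq 0$. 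Thus at most $d$ such progressions appear, each anchored by an idempotent $(\rho_r, \rho_r)$ lying in the finite window $[i, i+d-1]$.

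I then take $G = \{a\} \cup \{(p,q) \in S : 0 \leq p, q \leq i+d\}$, which is finite because its second part sits inside the finite square $[0, i+d]^2$. To verify $\langle G \rangle = S$, I would take any $(m, n) \in S$; if $\max(m,n) \leq i + d$, then $(m,n) \in G$. Otherwise, passing to the inverse if necessary, assume $n > i + d \geq m$. If $m \in [i, i+d-1]$, then $m = \rho_r$ for $r = m \bmod d$, so $(m,m) \in G$, and iterating the identity $(m, m+kd) \cdot a = (m, m + (k+1)d)$ reaches $(m,n)$. If $m < i$, then $(m, n) \in S$ combined with the structural fact $(n, \rho_r) \in S$ (for $r = m \bmod d$) via the straight product $(m, n)(n, \rho_r) = (m, \rho_r)$ shows $(m, \rho_r) \in G$, after which iteration by $a$ again reaches $(m, n)$. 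Finally, if $m \geq i + d$, both $m$ and $n$ lie in the progression $\{\rho_r + kd\}$, so $(m,n)$ sits in $\langle a, (\rho_r, \rho_r) \rangle \subseteq \langle G \rangle$.

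The hard part will be the structural claim of the second paragraph, namely that above $i$ the idempotent support $N$ decomposes into at most $d$ complete arithmetic progressions of step $d$, each starting inside the narrow window $[i, i+d-1]$. Once this is established, the rest is a bounded case analysis driven by right-multiplication by powers of $a$.
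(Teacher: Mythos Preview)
Your argument is correct and gives a self-contained proof.  The one organizational slip is the clause ``passing to the inverse if necessary, assume $n > i+d \geq m$'': when both coordinates exceed $i+d$ no inversion puts you in that range, and indeed you then treat the case $m \geq i+d$ separately.  Replacing the reduction by ``assume $n \geq m$'' and then splitting on $m < i$, $i \leq m < i+d$, $m \geq i+d$ fixes this.  Everything else checks: $(m,m)$ and $(n,n)$ lie in $S$ whenever $(m,n)$ does, so the residue class $r$ is occupied and $\rho_r$ is defined in each case; the multiplications $(m,\rho_r+kd)\cdot a=(m,\rho_r+(k{+}1)d)$ go through because $\rho_r\geq i$; and $(\rho_r,\rho_r)\in G$ since $\rho_r\leq i+d-1$.

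The paper's proof rests on the same idea --- pick a nonidempotent pivot (there $e_k x^m$, here $a=(i,i+d)$) and show that the idempotents of $S$ beyond a fixed height are organized into residue classes modulo the step, so that only the idempotents in a bounded window are needed --- but it outsources the nonidempotent part to Result~4.1 of Jones--Trotter on $FI_1$, which guarantees that the nonidempotents of $S$ already generate a finitely generated subsemigroup.  Your route bypasses that citation: by taking $d$ to generate $\phi(S)$ and letting $G$ contain every element of $S$ in the finite box $[0,i+d]^2$, you reduce an arbitrary element to $G$ directly via right-multiplication by powers of $a$.  The gain is an elementary argument independent of the structure theory of $FI_1$; the cost is the explicit case analysis, which the paper avoids by appeal to the quoted result.
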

  
  \begin{proof} Let $S$ be such a subsemigroup and let $e_k$ be the greatest idempotent for which the associated $\CR$-class of $S$ is nontrivial, where $k$ a nonnegative integer.  Say $e_k x^m \in S$, where $m$ is a positive integer.  Then for every nonnegative integer $r$, $S$ contains $(e_k x^m)^r = e_k x^{mr}$ and so $(e_k x^m)^{-r} (e_k x^m)^r = e_{k + rm}$.
  
Since $B$ is a quotient of the free inverse semigroup of rank one, $S$ is generated by a finite set $A$ of nonidempotents, together with a set $F$ of idempotents.   Suppose $S$ contains $e_{k+sm+i}$ for some $s$ and some $i$, $0 < i < m$.  Since $x^{sm}x^{-sm} = 1$, direct calculation shows that $S$ contains $(e_k x^{sm}) e_{k+sm + i} (e_k x^{sm})^{-1} = e_{k+i}$ and $e_{k+i} (e_k x^m) = e_{k+i} x^m$, in which case, further, $S$ contains $e_{k + rm +i}$ for every nonnegative $r$, as in the previous paragraph.
  
 Thus $S$ is generated by $A$ together with the finite set of idempotents $\{e_j : 0 \leq j < k + m\} \cap S$.
  \end{proof}

  \begin{corol}\label{bicyclic} The bicyclic semigroup $B$ has the Howson property.
  \end{corol}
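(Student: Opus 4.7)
My plan is to deduce Corollary~\ref{bicyclic} from Lemma~\ref{fg} by a case analysis on whether $U\cap V$ contains a non-idempotent, where $U,V$ are finitely generated inverse subsemigroups of $B$. If $U\cap V$ contains a non-idempotent, then Lemma~\ref{fg} applied to $U\cap V$ finishes the argument immediately. Otherwise $U\cap V\subseteq E_B$; since $E_B$ is a chain, $U\cap V$ is a subchain, and as a meet-semilattice of a chain it is finitely generated if and only if it is finite. So it remains to show $U\cap V$ is finite in this case. If one of $U,V$ consists solely of idempotents then, being a finitely generated subchain of $E_B$, it is itself finite and forces $U\cap V$ to be finite; I may therefore assume that both $U$ and $V$ contain non-idempotents.

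Here I would extract from the proof of Lemma~\ref{fg} the following structural description of $U$: there exist $k_U\geq 0$, $m_U\geq 1$, and a finite set $I_U\subseteq\{0,1,\ldots,m_U-1\}$ (containing $0$) such that, for each $i\in I_U$, both the idempotent $e_{k_U+i}$ and the non-idempotent $e_{k_U+i}x^{m_U}$ lie in $U$; taking the powers and inverse products of $e_{k_U+i}x^{m_U}$ then places every element $x^{-j}x^l$ in $U$ whenever $j,l$ both lie in the column $C_i^U:=\{k_U+i+rm_U:r\geq 0\}$. Moreover, for $j$ sufficiently large one has $e_j\in U$ if and only if $j\in C_i^U$ for some $i\in I_U$. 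The analogous data $k_V,m_V,I_V,(C_{i'}^V)_{i'\in I_V}$ describes $V$.

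Suppose toward contradiction that $U\cap V$ is infinite; then $E_U\cap E_V$ is infinite, and for $j$ sufficiently large this intersection coincides with the finite union $\bigcup_{i\in I_U,\,i'\in I_V}(C_i^U\cap C_{i'}^V)$, so some single piece $C_i^U\cap C_{i'}^V$ is infinite. By the Chinese remainder theorem each such piece, when nonempty, is an arithmetic progression of common difference $\operatorname{lcm}(m_U,m_V)$, so in particular it contains two distinct elements $j_1<j_2$. Then $x^{-j_1}x^{j_2}$ lies in $U$ (since $j_1,j_2\in C_i^U$) and in $V$ (since $j_1,j_2\in C_{i'}^V$), contradicting the standing hypothesis that $U\cap V\subseteq E_B$. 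The main obstacle I anticipate is the extraction of the column structure asserted in the second paragraph from the proof of Lemma~\ref{fg}---in particular, that any two idempotents in a single column $C_i^U$ are connected by an element of $U$ (the point being that once $e_{k_U+i}$ and $e_{k_U}x^{m_U}$ both lie in $U$, their product $e_{k_U+i}x^{m_U}$ generates the whole column). Once that bookkeeping is in place, the Chinese remainder step is straightforward.
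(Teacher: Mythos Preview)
Your argument is correct, and its high-level architecture matches the paper's: reduce to the case where $U\cap V\subseteq E_B$, dispose of the case where one of $U,V$ is idempotent-only, and then show that infinitely many common idempotents would force a common non-idempotent. The implementation of that last step, however, differs. The paper avoids extracting any column structure: it simply picks non-idempotents $u=e_jx^m\in U$ and $v=e_kx^n\in V$, replaces them by $u^n$ and $v^m$ so that $u\mathrel{\sigma}v$, and then invokes the $E$-unitarity of $B$ (specifically $\CR\cap\sigma=\Delta$) to conclude that for any $e_i\in U\cap V$ with $i\geq\max(j,k)$ one has $e_iu=e_iv\in U\cap V$; hence $U\cap V$ either contains a non-idempotent or is contained in the finite set $\{e_i:i<\max(j,k)\}$. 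Your route trades this one-line use of $E$-unitarity for a more detailed structural analysis (the column decomposition read off from Lemma~\ref{fg}) together with a Chinese-remainder/pigeonhole step. Both reach the same conclusion, but the paper's argument is shorter and more conceptual, while yours makes the arithmetic of $E_U$ and $E_V$ fully explicit and does not need to name $\sigma$ or $E$-unitarity at all.
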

  
  \begin{proof} Let $U, V$ be finitely generated inverse subsemigroups.  If either consists solely of idempotents, then it is finite, so assume otherwise.  Then for some positive integers $m$ and $n$ and nonnegative integers $j$ and $k$, $u = e_j x^m \in U$ and $v = e_k x^n \in V$.  By replacing $u$ and $v$ respectively by $u^n$ and $v^m$, if necessary, we may assume $m = n$, so that $u \mathrel{\sigma} v$. 
  
If $U \cap V$ contains a nonidempotent, then Lemma~\ref{fg} applies.  In particular, this is the case whenever $U \cap V$ contains an idempotent $ e_i$ such that $i \geq \max(j,k)$, that is, $e_i \leq uu^{-1}, vv^{-1}$: for then $e_iu \mathrel{\CR} e_i v$ and $e_i u \mathrel{\sigma} e_i v$, so since $B$ is $E$-unitary, $e_i u = e_i v $ is a nonidempotent of $U \cap V$.

 Otherwise, $U \cap V$ is contained in the finite set $\{e_i : i < \max (j,k)\}$.
  \end{proof}
  
 Apart from the free one, the monogenic inverse semigroups are determined by the following classes of defining relations \cite[Theorem IX.3.11]{petrich}, where $k, \ell$ are positive integers: (1) $x^k = x^{k + \ell}$; (2) $x^k x^{-1} = x^{-1} x^k$; (3) $x^k = x^{-1} x^{k+1}$.

The first class consists of the finite instances.  The second consists of the the infinite cyclic group ($k = 1$) and ideal extensions of that group by finite Rees quotients of $FI_1$.  The third consists of the bicyclic semigroup $B$  ($k = 1$) and ideal extensions of $B$ by finite Rees quotients of $FI_1$.
  
\begin{propo}\label{monogenic}  Every monogenic inverse semigroup has the Howson property.
\end{propo}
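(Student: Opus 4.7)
The proof splits into the four cases from the classification. If $S$ is the free monogenic inverse semigroup $FI_1$, the Howson property is the Jones--Trotter result quoted in the introduction. Finite monogenic inverse semigroups (class (1)) have it trivially. For $S$ in class (2), the maximal subgroups of $S$ are trivial or isomorphic to $\mathbb{Z}$, and both types have the Howson property; moreover $E_S$ is finite, consisting of one idempotent from the $\mathbb{Z}$-kernel plus finitely many from the finite Rees quotient of $FI_1$ sitting above. Theorem~\ref{theorem} then applies.

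For $S$ in class (3) the argument is more involved because $E_S$ is infinite. When $k = 1$, $S = B$ and Corollary~\ref{bicyclic} suffices. For $k \geq 2$, $S$ is an ideal extension of $B$ by a finite Rees quotient of $FI_1$, so $F := S \setminus B$ is finite. For finitely generated $U,V \leq S$, decompose
\[
U \cap V \;=\; \bigl((U\cap V)\cap F\bigr)\;\cup\;\bigl((U\cap B)\cap (V\cap B)\bigr),
\]
whose first piece is finite. Provided both $U\cap B$ and $V\cap B$ are finitely generated inverse subsemigroups of $B$, Corollary~\ref{bicyclic} yields $(U\cap B)\cap(V\cap B)$ finitely generated in $B$, and combining with the finite piece produces a finite generating set of $U\cap V$ in $S$.

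The task thus reduces to proving that if $U$ is finitely generated in $S$ then $U\cap B$ is finitely generated in $B$. If $U\cap B$ contains a nonidempotent, Lemma~\ref{fg} gives this at once. The remaining subcase $U\cap B \subseteq E_B$ is the main technical obstacle: one must show that $U\cap B$ is actually finite, as a sub-semilattice of the chain $E_B = \{e_0 > e_1 > \cdots\}$. The plan is to apply Lemma~\ref{tech} to $U$ and $J = B$, giving $U\cap B \subseteq \langle E_B Y\cap U\cap B\rangle$ for any finite generating set $Y$ of $U$. Under the all-idempotent hypothesis these generators are themselves idempotents of $E_B$, and products in the chain reduce to meets; thus $U\cap B$ is the sub-semilattice generated by a specific family of chain-idempotents arising from $Y$. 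The principal obstacle is to show that this family is finite, which requires exploiting the finiteness of $F$ (equivalently, of the Rees quotient $S/B$) together with the finiteness of $Y$ to bound how deep into $E_B$ a word in $Y\cup Y^{-1}$ that remains idempotent can reach. Once this bound is in place, $U\cap B$ is finite, and Corollary~\ref{bicyclic} combined with the finite piece in $F$ completes the proof.
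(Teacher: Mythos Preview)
Your treatment of $FI_1$, class~(1), class~(2), and the case $S=B$ matches the paper exactly.  For the remaining case---$S$ an ideal extension of $B$ by a finite Rees quotient of $FI_1$---your reduction to showing that $U\cap B$ is finitely generated is also the paper's, as is the appeal to Lemma~\ref{fg} once $U\cap B$ contains a nonidempotent.

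The divergence is in your residual subcase $U\cap B\subseteq E_B$, which you flag as ``the main technical obstacle'' and leave essentially unproved: you outline a plan via Lemma~\ref{tech} but do not establish the required bound on the depth of the idempotents $E_B Y\cap U\cap B$, and indeed $E_B Y$ is an infinite set, so Lemma~\ref{tech} by itself gives nothing here.  As written, the proof is therefore incomplete.

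The paper dissolves this obstacle rather than confronting it.  First reduce, as in Corollary~\ref{bicyclic}, to the case that $U$ contains a nonidempotent $u$ (otherwise $U$ is a finitely generated semilattice, hence finite, and $U\cap V$ is finite).  Now observe that $U\cap B$ \emph{must} contain a nonidempotent: since $S\setminus B$ is finite, some power $u^n$ lies in $B$, and since $x$ has infinite order (equivalently, $S$ is $E$-unitary with $S/\sigma\cong\mathbb{Z}$, so $u\sigma\neq 0$ implies $(u^n)\sigma=n\cdot u\sigma\neq 0$), no power $u^n$ is idempotent.  Hence the ``all-idempotent'' subcase for $U\cap B$ never arises once $U$ itself has a nonidempotent, and Lemma~\ref{fg} applies directly.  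This is the missing idea: rather than bounding how deep into $E_B$ one can reach, simply note that the problematic subcase is vacuous.
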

\begin{proof} We have already noted that $FI_1$ has this property.  For the first class this is obvious; those in the second class have finitely many idempotents and the maximal subgroups are either trivial or infinite cyclic, so Theorem~\ref{theorem} applies.  We have just proved that $B$ has the Howson property, so let $S$ be an ideal extension of $B$ by a finite Rees quotient of $FI_1$, where $x^n \in B$, say.  Let $U, V$ be finitely generated inverse subsemigroups of $S$.  As in the proof of Corollary~\ref{bicyclic}, we may assume that neither $U$ nor $V$ consists of idempotents. Since $x$ is of infinite order, neither $U \cap B$ nor $V \cap B$ consists of idempotents.  By Lemma~\ref{fg}, $U \cap B$ and $V \cap B$ finitely generated, so $(U \cap V) \cap B$ is finitely generated and, since $(U \cap V) \backslash B$ is finite, $U \cap V$ is also finitely generated.
\end{proof}

\end{document}